\documentclass[a4paper,11pt]{amsart}
\usepackage[T1]{fontenc}
\usepackage[utf8]{inputenc}
\usepackage{lmodern}
\usepackage[a4paper,textwidth=160mm,top=28mm,bottom=28mm,centering]{geometry}
\usepackage{amsmath,amssymb,amsthm,mathrsfs,mathtools}
\usepackage{microtype}
\usepackage{needspace}
\usepackage[unicode,hidelinks]{hyperref}
\usepackage[capitalise,nameinlink,noabbrev]{cleveref}
\allowdisplaybreaks[1]
\numberwithin{equation}{section}
\newtheorem{lemma}{Lemma}[section]
\newtheorem{maintheorem}{Theorem}

\newtheorem{mainproposition}[maintheorem]{Proposition}
\theoremstyle{definition}
\newtheorem{question}{Question}[section]
\theoremstyle{plain}
\crefname{maintheorem}{Theorem}{Theorems}
\crefname{mainproposition}{Proposition}{Propositions}
\crefname{question}{Question}{Questions}
\newcommand{\CC}{\mathbb C}
\newcommand{\RR}{\mathbb R}
\newcommand{\ZZ}{\mathbb Z}
\newcommand{\End}{\operatorname{End}}
\newcommand{\Hom}{\operatorname{Hom}}
\newcommand{\Id}{\operatorname{Id}}

\newcommand{\diag}{\operatorname{diag}}
\newcommand{\NHYM}{\operatorname{NHYM}}
\newcommand{\dbar}{\bar\partial}
\newcommand{\dd}{\mathrm d}
\newcommand{\Mgs}{\mathcal M^{gs}}
\newcommand{\Ms}{\mathcal M^{s}}
\newcommand{\cE}{\mathcal E}
\title[NHYM connections with vanishing Chern classes]
{Non-Hermitian Yang--Mills Connections\newline
with Vanishing Chern Classes}
\author[G.-Z. Ren]{Guangzhen Ren}
\address{Department of Mathematics, Zhejiang International Studies University,
Hangzhou 310023, China}
\email{gzren@zisu.edu.cn}
\subjclass[2020]{Primary 53C07; Secondary 32L05, 53C26}
\keywords{Non-Hermitian Yang--Mills connection, vanishing Chern classes,
slope stability, harmonic metric, moduli map}
\date{}
\hypersetup{pdftitle={Non-Hermitian Yang--Mills Connections with Vanishing Chern Classes},
pdfauthor={Guangzhen Ren}}
\begin{document}
\begin{abstract}
We construct non-flat non-Hermitian Yang--Mills connections with zero
Einstein constant on smoothly trivial rank-two bundles over compact
K\"ahler surfaces, with positive harmonic metrics and stable induced
and adjoint holomorphic bundles. Thus vanishing Chern classes do not
force flatness even under stability on both sides. The local model
comes from a known complex anti-self-dual ansatz; the stable descents
yield explicit global moduli phenomena. For a fixed stable bundle,
the Kaledin--Verbitsky map contracts a complex line containing flat
and non-flat points in the connected component of the
Hermitian--Einstein connection. Contraction also occurs in the
two-sided stable locus. The pair of holomorphic projections has a
fibre containing both flat and non-flat points. An energy identity
and a spectral-gap estimate quantify local flatness with the induced
holomorphic structure fixed.
\end{abstract}
\maketitle

\section{Introduction}\label{sec:intro}

On a compact K\"ahler manifold $(X,\omega)$, a complex connection
$D$ on a smooth complex vector bundle $E$ is
\emph{non-Hermitian Yang--Mills} (NHYM) if
\begin{equation}\label{eq:NHYM}
 F_D\in\Omega^{1,1}(X,\End E),\qquad
 i\Lambda_\omega F_D=\lambda\Id_E
\end{equation}
for a constant $\lambda$. Compatibility with a Hermitian metric is
not assumed. We study the case $\lambda=0$.
For a unitary solution in this case, the Chern--Weil identity implies
that $c_1(E)=c_2(E)=0$ forces $F_D=0$ \cite{Lubke}.
For a complex connection the trace pairing on curvature is not
positive, and this argument is unavailable.

Kaledin--Verbitsky posed the following question
\cite[Question~8.8, p.~316]{KV}, later restated by Shen--Zhang
\cite[Question~4.4, p.~401]{SZ}.

\begin{question}[Kaledin--Verbitsky flatness question]
\label{q:KV-flatness}
Let $(X,\omega)$ be a compact K\"ahler manifold and let $E\to X$
be a complex vector bundle with vanishing Chern classes.
If $D$ is an NHYM connection on $E$ with zero Einstein constant,
must $D$ be flat?
\end{question}

Our main examples retain stability of both the induced and the
conjugate-dual holomorphic bundles, as well as a positive harmonic
metric. They occur on smoothly trivial rank-two bundles over compact
K\"ahler surfaces. We also construct an example over a threefold
with a flat K\"ahler metric. The constructions also give a
non-injective, non-\'etale fixed-bundle map and a non-injective
pair of holomorphic projections.

We fix the notation needed to state the constructions. Set
\begin{equation}\label{eq:PQ}
 P=\begin{pmatrix}1&0\\0&-1\end{pmatrix},\qquad
 Q=\begin{pmatrix}0&1\\1&0\end{pmatrix},\qquad
 R=\tfrac12[P,Q]=\begin{pmatrix}0&1\\-1&0\end{pmatrix},\qquad S=iR,
\end{equation}
and write $K=[P,Q]=2R$. We will use the identities
\begin{equation}\label{eq:S-relations}
 P^*=P,\quad Q^*=Q,\quad R^2=-\Id,\quad
 S^*=S=S^{-1},\quad SPS=-P,\quad SQS=-Q.
\end{equation}
Let $T_1=\CC/(\ZZ+i\ZZ)$, $T=T_1^2$, and $T_3=T_1^3$,
with standard product K\"ahler forms
$\frac i2\sum_j\dd z^j\wedge\dd\bar z^j$; write $\omega_T$
for this form on $T$. Define
\[
 Y=T/\langle\sigma\rangle,\qquad
 \sigma(z^1,z^2)=(z^1+\tfrac12,-z^2),
\]
\[
 W=T_3/\langle\gamma\rangle,\qquad
 \gamma(z^1,z^2,z^3)=(-z^1,-z^2,z^3+\tfrac12).
\]
These free holomorphic involutions preserve the product metrics.
Thus $Y$ and $W$ inherit flat K\"ahler metrics; $Y$ is a
bielliptic surface.

For the surface with stability on both sides, take a connected
unramified double cover $p:C\to B$ of a compact curve of genus
$b\ge2$, with deck involution $\tau$. Choose a nonzero
anti-invariant holomorphic one-form $\alpha$ on $C$, normalized by
\begin{equation}\label{eq:alpha-period}
 \tau^*\alpha=-\alpha,\qquad \int_\ell\alpha=i
\end{equation}
for a closed loop $\ell$. The existence of this choice is checked
in the proof. Set
\[
 Z=(C\times T_1)/\langle g\rangle,\qquad g(c,z)=(\tau c,-z).
\]
We equip $Z$ with the quotient of a $\tau$-invariant K\"ahler
metric on $C$ and the standard metric on $T_1$. The bundles are
\begin{align*}
 E_Y&=(T\times\CC^2)/((z,v)\sim(\sigma z,Pv)),\\
 E_Z&=(C\times T_1\times\CC^2)/((c,z,v)\sim(\tau c,-z,Sv)),\\
 E_W&=(T_3\times\CC^2)/((z,v)\sim(\gamma z,Sv)).
\end{align*}
The standard positive Hermitian metric is denoted by $h_0$.
Equivariant formulas on a cover also denote their descents.

For a connection with $(1,1)$ curvature, $D^{0,1}$ defines the
\emph{induced holomorphic bundle}. The conjugate-dual connection
on $\overline E^{\,*}$ defines the \emph{adjoint holomorphic
bundle}. A Hermitian metric identifies its underlying smooth bundle
with $E$. In an $h$-unitary frame, the adjoint of $D=\dd+A$ is
$D^\dagger=\dd-A^*$, where scalar one-forms are conjugated.
Thus the second holomorphic structure is determined by $D^{1,0}$.
Stability always means slope stability for the chosen K\"ahler form.
A connection is \emph{irreducible} if it preserves no nonzero
proper smooth complex subbundle.

For a positive Hermitian metric $h$, write $D=D_h+\psi_h$,
where $D_h$ is unitary and $\psi_h$ is self-adjoint on real
tangent vectors. The metric is \emph{harmonic} if
$D_h^*\psi_h=0$ \cite{PSZ,WZ}. The connection $D_h$ need not
be the Chern connection of $(E,D^{0,1},h)$.

\Needspace{12\baselineskip}
\begin{maintheorem}\label{thm:main}
For real \(t\ne0\), each connection below is non-flat, irreducible
and NHYM with zero Einstein constant on a smoothly trivial rank-two
complex bundle. The metric \(h_0\), or its descent, is harmonic, and
all integral Chern classes vanish.
\begin{enumerate}
\item On \(T\),
\(D_t=\dd+tP\,\dd z^1+tQ\,\dd\bar z^2\).
Its induced bundle is a sum of degree-zero line bundles and is
holomorphically trivial exactly when \(t\in\pi\ZZ\).
\item On \(E_Y\), the descent \(\mathcal D_t\) of \(D_t\).
Its induced bundle is stable for \(t\notin(\pi/2)\ZZ\);
its adjoint bundle is polystable and not stable for every \(t\).
\item On \(E_Z\), for every real \(s\ne0\), the descent
\(\mathcal D^Z_{s,t}\) of
\(\dd+sP\alpha+tQ\,\dd\bar z\).
Both holomorphic bundles are stable if \(s,t\notin(\pi/2)\ZZ\).
\item On \(E_W\), the descent \(\mathcal A_t\) of
\(\dd+tP\,\dd z^1+tQ\,\dd\bar z^2\).
Both holomorphic bundles are stable if \(t\notin(\pi/2)\ZZ\).
\end{enumerate}
Rank two and complex dimension two are minimal for non-flat
zero-Einstein-constant NHYM examples with vanishing Chern classes.
\end{maintheorem}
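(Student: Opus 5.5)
The plan is to begin with the local computation on the torus \(T\) and then show that everything descends.

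\emph{Local computation.} For \(A=tP\,\dd z^1+tQ\,\dd\bar z^2\) the forms are constant, so \(\dd A=0\) and
\[
F=A\wedge A=t^2[P,Q]\,\dd z^1\wedge\dd\bar z^2=t^2K\,\dd z^1\wedge\dd\bar z^2 .
\]
This curvature is of type \((1,1)\). Since \(\Lambda_\omega(\dd z^1\wedge\dd\bar z^2)=0\) for the product metric, we get \(i\Lambda F=0\). It is nonzero because \(K\ne0\). The same computation on \(C\times T_1\) gives
\[
F=st[P,Q]\,\alpha\wedge\dd\bar z ,
\]
using \(\dd\alpha=0\). Again \(\Lambda(\alpha\wedge\dd\bar z)=0\), because \(\alpha\) lives on the first factor. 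On \(T_3\) the curvature is the same as on \(T\).

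\emph{Harmonicity.} In the frame \(h_0\), split \(A\) into its skew-Hermitian part \(a\) and its Hermitian part \(\psi\). Here \(\psi\) is the Hermitian part of \(tP\,\dd z^1+tQ\,\dd\bar z^2\), namely \(\tfrac t2 P(\dd z^1+\dd\bar z^1)+\tfrac t2 Q(\dd z^2+\dd\bar z^2)\), and similarly for the \(\alpha\) term. The condition \(D_h^*\psi=-\ast[\,\cdot\,]\) splits into two pieces. The first is \(\dd^*\) of constant or harmonic forms, which is zero. The second is an algebraic term \(\sum[a_j,\psi_j]\) pairing the same coordinate directions. Since \(a\) involves \(P\) only in the \(z^1\) (or \(\alpha\)) directions and \(Q\) only in the \(z^2\) directions, this term vanishes.

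\emph{Descent.} I would check equivariance of each connection.
\begin{itemize}
\item On \(Y\): \(\sigma^*\dd z^1=\dd z^1\), and \(\sigma^*\dd\bar z^2=-\dd\bar z^2\). Conjugating by \(P\) fixes \(P\) and sends \(Q\) to \(-Q\), so the connection is invariant.
\item On \(Z\) and \(W\): \(S\) anticommutes with both \(P\) and \(Q\), while each coordinate form involved changes sign (\(\tau^*\alpha=-\alpha\), \(\dd\bar z\mapsto-\dd\bar z\)). The direction \(\dd z^3\) is not involved.
\end{itemize}
Since \(P\) and \(S\) are unitary, \(h_0\) descends and harmonicity is local.

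\emph{Chern classes and smooth triviality.} The bundles are flat bundles with holonomy in \(\{\Id,P\}\) or \(\{\Id,S\}\). Each of \(P\) and \(S\) is diagonalizable with eigenvalues \(\pm1\), so the bundle splits as \(L\oplus L'\), where \(L\) is trivial and \(L'\) is the real flat line bundle of the double cover. Hence \(c_1=w_1\)-related 2-torsion, and \(L'\oplus L'\) has the same total class. I expect that \(c_1(L')\) is torsion and nonzero in general. Since the statement claims integral classes vanish and smooth triviality, the right argument is that \(S\) lies in the connected group \(SU(2)\) (\(\det S=-(-1)=1\)? since \(\det R=1\), \(\det S=-1\)); so I would instead argue directly. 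Here \(\det P=\det S=-1\) gives \(\det E=L'\). The point is that the real rank-two bundle \(L'_\RR\oplus\RR\), complexified, is isomorphic to \(L'\otimes\CC\oplus\CC\). Its \(c_1\) equals \(\beta(w_1)\), the Bockstein of the class of the cover. I would show that \(\beta(w_1)=0\) by verifying that \(w_1\) lifts to an integral class mod 2, which holds because each cover is pulled back from a circle (the coordinate \(\mathrm{Re}\,z^1\), resp. the loop \(\ell\) direction, resp. \(z^3\)). Then \(L'\) is smoothly trivial and \(c_2=0\). This avoids the sign worry.

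\emph{Irreducibility.} Take the holonomy algebra. On the cover it contains the curvature direction \(K\) together with parallel transport generated by \(P\) and \(Q\) exponentials. These generate all of \(\mathfrak{gl}_2\) modulo scalars, so there is no invariant line.

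\emph{Holomorphic structure in (1).} \(D^{0,1}=\dbar+tQ\,\dd\bar z^2\) is abelian. Diagonalize \(Q\) with eigenvalues \(\pm1\); the bundle is \(L_t\oplus L_{-t}\), where \(L_t\) has \(\dbar\)-operator \(\dbar+t\,\dd\bar z^2\). This is the degree-zero line bundle corresponding to the point \(t\,\dd\bar z^2\in H^{0,1}/\text{lattice}\). It is trivial iff \(t\,\dd\bar z^2\) is \(\dbar\) of the log of a character, which happens iff \(t\in\pi\ZZ\), after computing the lattice \(\pi(\ZZ+i\ZZ)\)-type condition for real \(t\).

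\emph{Stability in (2)--(4).} On the quotient, the induced bundle is the descent of \(L_t\oplus L_{-t}\), where the involution swaps the two summands because \(P\) or \(S\) anticommutes with \(Q\). A degree-zero rank-two bundle that is the pushforward (or invariant descent) of a line bundle \(L\) from a double cover is stable iff \(\sigma^*L\not\cong L\), i.e.\ \(L_t\not\cong L_{-t}\), i.e.\ \(L_{2t}\) nontrivial, i.e.\ \(t\notin(\pi/2)\ZZ\). The adjoint bundle is governed by \(D^{1,0}\) in the same way. In (2) the adjoint side involves \(P\), which commutes with \(P\); the summands are not swapped, giving polystable but not stable. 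In (3) and (4) \(S\) swaps the summands, giving stability for \(s\notin(\pi/2)\ZZ\) via \(\alpha\) and the loop \(\ell\) with \(\int_\ell\alpha=i\).

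\emph{Minimality.}
\begin{itemize}
\item Rank one: \(F\) is a scalar closed \((1,1)\)-form with \(\Lambda F=0\), so \(F\) is harmonic, and \(c_1=0\) forces \(F=0\).
\item Dimension one: \(F\) is a top form with \(\Lambda F=0\), so \(F=0\).
\end{itemize}

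The main obstacle is the stability criterion for descended line-bundle pushforwards, particularly the existence and normalization of \(\alpha\) with \(\tau^*\alpha=-\alpha\) and a period equal to \(i\), and the computation of the lattice condition.
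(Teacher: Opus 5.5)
Your outline is correct, and much of it follows the paper. This includes the curvature and primitivity computation, the harmonic splitting with $[a(e_j),\psi(e_j)]=0$ direction by direction, equivariance under $P$ and $S$, and minimality via ``closed primitive $(1,1)$ is harmonic, exact harmonic is zero''. Your holonomy-algebra irreducibility argument is the Ambrose--Singer form of the paper's $K$-eigenline argument; on $Z$, take the base point where $\alpha\ne0$.

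You genuinely differ in two places.
\begin{itemize}
\item \emph{Stability.} You identify each descended holomorphic bundle with $\pi_*L$, the lift swapping $L$ and $\sigma^*L$. This reduces stability to $L\not\cong\sigma^*L$, i.e.\ to nontriviality of one degree-zero line bundle: $\dbar\pm2t\,\dd\bar z^j$ on an elliptic factor, or $\dbar-2s\bar\alpha$ on $C$, whose flat unitary Chern connection has holonomy $e^{\pm4is}$ around $\ell$. The paper instead computes the holonomy of the flat unitary Chern connection on the quotient. There a deck loop with holonomy $P$, $S$ or $SH_s$ swaps the two eigenlines of a torus or curve loop, and \cref{lem:flat-unitary} finishes. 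This is Mackey's criterion versus its explicit form: yours isolates the arithmetic condition cleanly, while the paper's is self-contained.
\item \emph{Smooth triviality.} You use $c_1(L')=\beta(w_1)$ together with an integral lift of $w_1$. The paper writes explicit frames $G(z)$ on $Y$ and $W$; your circle maps via $\operatorname{Re}z^1$ and $\operatorname{Re}z^3$ encode the same thing.
\end{itemize}

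Three loose ends remain.

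First, the pushforward criterion is only asserted. It is true, and here the quickest proof goes through \cref{lem:flat-unitary}. All line bundles involved are flat unitary, and $\pi_*L$ carries the induced representation, which is irreducible iff $L\not\cong\sigma^*L$. Holomorphic and flat isomorphism agree for flat unitary bundles, since holomorphic homomorphisms between them are parallel.

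Second, the existence of $\alpha$ is part of the theorem, and you leave it open. It follows quickly:
\begin{itemize}
\item Riemann--Hurwitz gives $g(C)=2b-1$.
\item The invariant forms are $p^*H^0(B,K_B)$, so the anti-invariant space has dimension $b-1\ge1$.
\item A nonzero holomorphic one-form has a nonzero period; otherwise it would have a global holomorphic primitive, which is constant.
\item Multiplying by a complex constant gives $\int_\ell\alpha=i$ and preserves anti-invariance.
\end{itemize}

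Third, on $Z$, ``the loop $\ell$ direction'' is not a valid reason: $\ell$ only normalizes $\alpha$ and has nothing to do with the cover $C\times T_1\to Z$. The correct reason is that this cover is pulled back from $C\to B$ along $f:Z\to B$. Hence $c_1(L')=f^*c_1(\eta)$, and $c_1(\eta)=\beta(w_1(\eta))$ is $2$-torsion in $H^2(B;\ZZ)\cong\ZZ$, so it vanishes. You should also delete the aside that $c_1(L')$ is ``torsion and nonzero in general'': it vanishes in all three cases.
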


In particular, part~(3) answers \cref{q:KV-flatness} negatively
in the smallest possible rank and dimension while retaining
stability on both sides.

We next describe the global moduli consequences. On a fixed smooth
bundle over $(X,\omega)$, let $\Ms$ be the moduli space of
NHYM connections whose induced bundle is stable, following
\cite[\S2.3]{KV}, and let
$\Mgs\subset\Ms$ be the locus where the adjoint bundle is
stable as well. Write $\Ms_0$ for the corresponding moduli
space of stable holomorphic bundles. All moduli are taken modulo
complex gauge. The two holomorphic projections combine to
\begin{equation}\label{eq:Pi}
 \Pi=\pi\times\bar\pi:\Mgs\longrightarrow
 \Ms_0\times\overline{\Ms_0}.
\end{equation}

For a fixed stable holomorphic bundle $\cE$ with
Hermitian--Einstein Chern connection $\nabla$, write
$\NHYM(\cE)=\pi^{-1}([\cE])$. Each class has a unique
representative $D=\nabla+\varphi$ with
$D^{0,1}=\nabla^{0,1}$ and
$\varphi\in\Omega^{1,0}(X,\End E)$.
The Kaledin--Verbitsky map is represented by harmonic projection:
\begin{equation}\label{eq:rho-definition}
 \begin{aligned}
 \rho_\cE:\NHYM(\cE)&\longrightarrow
 \mathcal H^{1,0}_{\partial_\nabla}(\End E)
 \simeq\overline{H^1(X,\End\cE)},\\
 [\nabla+\varphi]&\longmapsto H_\nabla\varphi.
 \end{aligned}
\end{equation}
This is equation~(2.5) of \cite{KV}; its flat-background
realization is recalled in \cref{sec:rho}.
Near the Hermitian--Einstein point, $\rho$ is a closed embedding
\cite[Proposition~2.16, p.~291]{KV}.

\Needspace{8\baselineskip}
\begin{question}[Kaledin--Verbitsky global-map question]
\label{q:KV-global-map}
For a stable holomorphic bundle $\cE$, let $\mathcal K_\cE$
be the Zariski closure of the image of a local Kuranishi embedding
at $[\cE]$. Kaledin--Verbitsky formulate the fixed-bundle map
$\rho_\cE$ with target $\overline{\mathcal K_\cE}$
\cite[Question~8.10, p.~316]{KV}.
Is this map \'etale? Is it bijective?
\end{question}

Our examples belong to the zero-Chern case, where the
Hermitian--Einstein connection is flat. For the contracted family
below, we also verify directly that the common harmonic value
lies in the target specified in \cref{q:KV-global-map}.

\Needspace{10\baselineskip}
\begin{maintheorem}\label{thm:rho-main}
Fix \(t\in\RR\setminus(\pi/2)\ZZ\). The stable holomorphic bundle
\(\cE_t^Z=(E_Z,(\mathcal D^Z_{s,t})^{0,1})\) is independent of
\(s\). For every \(s\in\CC\), the same formula defines a descended
NHYM connection, and these connections are pairwise inequivalent with
\[
 \rho_{\cE_t^Z}([\mathcal D^Z_{s,t}])=tQ\,\dd z.
\]
The line contains a flat point at \(s=0\) and non-flat points at all
\(s\ne0\). It lies in the connected component of
\(\NHYM(\cE_t^Z)\) containing its Hermitian--Einstein connection.
If \(\operatorname{Re}s\notin(\pi/2)\ZZ\), its class belongs to
\(\Mgs\). Thus \(\rho\) is neither injective nor \'etale, and it
contracts a nonconstant complex curve within \(\Mgs\) already on
a compact K\"ahler surface.
\end{maintheorem}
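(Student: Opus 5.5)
The plan is to compute directly on the cover $C\times T_1$ and then descend. First I would verify the NHYM property for complex $s$. Write $A_{s,t}=sP\alpha+tQ\,\dd\bar z$. Since $\alpha$ is a closed holomorphic one-form and $\dd\bar z$ is closed, one gets $F=A\wedge A=st\,[P,Q]\,\alpha\wedge\dd\bar z=st\,K\,\alpha\wedge\dd\bar z$. This form is of type $(1,1)$, and $\Lambda_\omega(\alpha\wedge\dd\bar z)=0$ because the product Kähler form has no mixed $C$–$T_1$ component. Hence $F$ is NHYM with $\lambda=0$ for every $s\in\CC$, and it is flat exactly when $st=0$. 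Equivariance under $g$ follows from $\tau^*\alpha=-\alpha$, $g^*\dd\bar z=-\dd\bar z$ and the relations $SPS=-P$, $SQS=-Q$ of \eqref{eq:S-relations}, so the formula descends. The $(0,1)$-part is $\dbar+tQ\,\dd\bar z$, which does not involve $s$. Thus $\cE^Z_t$ is independent of $s$, and it is stable by \cref{thm:main}(3).

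Next I would identify the Hermitian--Einstein connection and compute $\rho$. Since $t$ is real and $Q=Q^*$, the Chern connection of $(\cE^Z_t,h_0)$ is $\nabla=\dd+tQ\,\dd\bar z-tQ\,\dd z$. Its curvature vanishes, so $\nabla$ is flat unitary and hence Hermitian--Einstein. The normal-form representative is then $\nabla+\varphi$ with $\varphi=sP\alpha+tQ\,\dd z\in\Omega^{1,0}$.

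To compute $H_\nabla\varphi$, I would split $\End E$ into $\nabla$-parallel flat subbundles: the span of $\Id$ and $Q$, on which $\nabla$ acts trivially on the cover, and the span of $P$ and $R$. On the latter, $\operatorname{ad}(tQ)$ has eigenvalues $\pm2t$, so $\nabla$ restricts to flat unitary line bundles on the $T_1$ factor with nontrivial holonomy $e^{\pm 2\pi i t\cdot(\text{periods})}$ when $t\notin(\pi/2)\ZZ$. Harmonic projection is $L^2$-orthogonal and respects this parallel splitting. The $P$-component of $\varphi$ lies in a flat bundle whose cohomology vanishes by a Künneth argument, since the $T_1$-factor cohomology of a nontrivial unitary flat line bundle is zero. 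Hence $H_\nabla(sP\alpha)=0$. On the other hand, $Q\,\dd z$ is $g$-invariant, because $SQS=-Q$ and $g^*\dd z=-\dd z$, and it is parallel, hence harmonic. This gives $\rho=tQ\,\dd z$. I would also check that $tQ\,\dd z$ lies in the tangent image of the Kuranishi model, as \cref{q:KV-global-map} requires. This holds because $[tQ\,\dd z,tQ\,\dd z]=0$, so the Kuranishi obstruction vanishes along that ray.

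For pairwise inequivalence I would use stability. Any complex gauge transformation relating two representatives with the same $(0,1)$-part is a holomorphic automorphism of the stable bundle $\cE^Z_t$, hence a scalar. A scalar gauge transformation fixes $D^{1,0}$, and the $(1,0)$-parts differ by $(s-s')P\alpha\ne0$. For connectedness, the $s$-line contains $s=0$, which is $\nabla+tQ\,\dd z$. The path $u\mapsto\nabla+u\,tQ\,\dd z$, $u\in[0,1]$, consists of flat connections with unchanged $(0,1)$-part, so it lies in $\NHYM(\cE^Z_t)$ and joins this point to $\nabla$. Non-injectivity is now immediate, and non-étaleness follows because $\rho$ contracts a curve.

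The step I expect to be the main obstacle is membership in $\Mgs$ for complex $s$. The adjoint connection is $\dd-\bar sP\bar\alpha-tQ\,\dd z$, so its holomorphic structure is $\dbar-\bar sP\bar\alpha$. I would remove $\operatorname{Im}s$ by a diagonal complex gauge transformation $\exp(cP\,f)$, where $f$ is a function with $\dbar f$ proportional to $\operatorname{Im}(s)\bar\alpha$ modulo terms compensated by the normalization $\int_\ell\alpha=i$. The goal is to reduce to the real-parameter case $\operatorname{Re}s$ and then invoke the stability criterion from \cref{thm:main}(3). If this direct gauge trick fails, the fallback is to redo that proof's argument: any destabilizing line subbundle of degree $\ge0$ would have to be $P$-invariant, and its flat twist has holonomy governed only by $\operatorname{Re}s$.
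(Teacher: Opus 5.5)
\textbf{Where the proposal stands.} Most of your outline agrees with the paper: the curvature and descent computations, the normal form $\varphi_{s,t}=sP\alpha+tQ\,\dd z$, inequivalence via $\operatorname{Aut}(\cE_t^Z)=\CC^*\Id$, and the connecting path $\nabla+u\,tQ\,\dd z$, which is the first leg of the paper's two-axis path. Your proof of $H(P\alpha)=0$ takes a different route. You use Künneth on $C\times T_1$, with nontrivial $y$-holonomies $e^{\pm4it}$, to show that the parallel summand spanned by $P,R$ has no $\partial_\nabla$-harmonic forms at all. The paper instead shows explicitly that $P\alpha$ is coexact, via $\partial^*_{\nabla_t^+}(R\alpha\wedge\dd z)=-4tP\alpha$, which also gives the eigenvalue bound at the end of \cref{sec:energy}. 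Both arguments work.

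\textbf{The gap: $\Mgs$-membership for non-real $s$.} You need this because a complex curve in the $s$-line cannot stay in $\RR$. Your main plan, gauging away $\operatorname{Im}s$ to reduce to \cref{thm:main}(3), cannot work.
\begin{itemize}
\item Since $\bar\alpha$ is a nonzero harmonic $(0,1)$-form, no global $f$ satisfies $\dbar f\propto\bar\alpha$.
\item Allowing a multivalued $f$ in $\exp(cPf)$ would require $\operatorname{Im}(s)\operatorname{Re}\int_\gamma\alpha\in\pi\ZZ$ for every loop $\gamma$ in $C$, not just for $\ell$.
\item In fact the adjoint bundle genuinely depends on $\operatorname{Im}s$. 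By \cref{lem:flat-unitary}, holomorphic isomorphisms between flat unitary bundles are parallel. The adjoint-side holonomy around a loop $\gamma\subset C$ acts on the $P$-eigenlines by $e^{\mp2i\operatorname{Im}(s\int_\gamma\alpha)}$. This involves $\operatorname{Im}(s)\operatorname{Re}\int_\gamma\alpha$, and $\operatorname{Re}\alpha$ has nonzero periods.
\end{itemize}
Your fallback points in the right direction, but as stated it rests on the false claim that the holonomy is governed only by $\operatorname{Re}s$. It also leaves unjustified why a destabilizing line must be $P$-invariant.

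\textbf{What closes it.} No reduction is needed.
\begin{itemize}
\item For every $s\in\CC$, the adjoint operator $\dbar-\bar sP\bar\alpha$ is the $(0,1)$ part of $\nabla_s^-=\dd+sP\alpha-\bar sP\bar\alpha$. This connection is unitary and flat, because its coefficient is $P$ times the closed, purely imaginary form $s\alpha-\overline{s\alpha}$.
\item \cref{lem:flat-unitary} then reduces stability to irreducibility of the holonomy, and two loops suffice.
\item Along $\ell$: the normalization $\int_\ell\alpha=i$ gives $\int_\ell(s\alpha-\bar s\bar\alpha)=2i\operatorname{Re}s$. So the holonomy is $e^{-2i(\operatorname{Re}s)P}$, whose eigenvalues are distinct exactly when $\operatorname{Re}s\notin(\pi/2)\ZZ$. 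Any invariant line must therefore be a $P$-eigenline.
\item Along the deck loop: the holonomy is $SH_s$ with $H_s$ unitary and diagonal in the $P$-eigenbasis, so it swaps the two eigenlines.
\end{itemize}
The other periods, which do see $\operatorname{Im}s$, play no role. This puts a disk $|s-1|<\varepsilon$ inside $\Mgs$ on which $\rho$ is constant, which is what ``contracts a complex curve within $\Mgs$'' requires.

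\textbf{A smaller point.} The vanishing bracket $[Q\,\dd z,Q\,\dd z]=0$ only places a germ of $\CC\,Q\,\dd z$ at $0$ in the Kuranishi image. To reach $tQ\,\dd z$ for arbitrary $t$, you must also use that the target is Zariski closed, as the paper does with the flat family $\nabla_t^++aQ\,\dd z$.
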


The failure of injectivity excludes bijectivity, so
\cref{thm:rho-main} gives negative answers to both parts of
\cref{q:KV-global-map}. Connectedness here is in the ambient
fixed-bundle space $\NHYM(\cE_t^Z)$.
The two holomorphic projections also fail to distinguish flatness.

\Needspace{8\baselineskip}
\begin{mainproposition}\label{prop:C}
On each of \(Z\) and \(W\), a fibre of \(\Pi\) contains both a
flat and a non-flat connection in \(\Mgs\) on a smoothly trivial
rank-two bundle. On \(Z\), for real \(s,t\notin(\pi/2)\ZZ\),
the pair is induced by
\[
 \dd+sP\alpha+tQ\,\dd\bar z,\qquad
 \dd+sP\alpha+tP\,\dd\bar z.
\]
On \(W\), for real \(t\notin(\pi/2)\ZZ\), it is induced by
\(\dd+tP\,\dd z^1+tQ\,\dd\bar z^2\) and
\(\dd+tP(\dd z^1+\dd\bar z^2)\).
\end{mainproposition}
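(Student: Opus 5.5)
The plan is to show that in each pair the two connections have literally the same $(1,0)$-part and $(0,1)$-parts that differ by a constant unitary gauge transformation which commutes with $S$. Then both holomorphic projections agree. Flatness, a complex-gauge invariant, separates the two points: the first connection in each pair is the non-flat connection of \cref{thm:main}, and the second is abelian and flat.

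\emph{The flat partners.} On $C\times T_1$ take $A'=sP\alpha+tP\,\dd\bar z$. Both one-forms are closed, since $\alpha$ is a holomorphic one-form on a curve, and both coefficients are multiples of the single matrix $P$. Hence $\dd A'=0$ and $A'\wedge A'=0$, so $F=0$. The connection descends to $E_Z$ because
\[
 g^*(P\alpha)=-P\alpha=S(P\alpha)S,\qquad g^*(P\,\dd\bar z)=-P\,\dd\bar z=S(P\,\dd\bar z)S,
\]
using $\tau^*\alpha=-\alpha$, the action $z\mapsto -z$ and $SPS=-P$ from \eqref{eq:S-relations}. The same argument applies on $W$ to $tP(\dd z^1+\dd\bar z^2)$, since $\gamma$ negates $\dd z^1$ and $\dd\bar z^2$. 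The non-flat partners are the connections of \cref{thm:main}(3),(4). Their curvature contains the term $st[P,Q]\,\alpha\wedge\dd\bar z\neq0$, respectively $t^2[P,Q]\,\dd z^1\wedge\dd\bar z^2\neq0$. Because $F$ transforms by conjugation under complex gauge, the two classes in each pair are distinct points of the moduli space.

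\emph{Equality of the projections.} Let $U$ be the real rotation by angle $\pi/4$. Conjugation by a rotation through $\theta$ sends $P$ to $\cos2\theta\,P+\sin2\theta\,Q$, so $UPU^{-1}=Q$. Rotations commute with $R$ and hence with $S$. Therefore the constant map $U$ descends to an automorphism of $E_Z$, respectively $E_W$. It intertwines $\dbar+tP\,\dd\bar z$ with $\dbar+tQ\,\dd\bar z$ (respectively with $\dd\bar z^2$ in place of $\dd\bar z$), so the two induced holomorphic bundles are isomorphic and $\pi$ agrees on each pair.

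For $\bar\pi$, the adjoint holomorphic structure is determined by $D^{1,0}$, as noted before \cref{thm:main}. In the $h_0$-unitary frame its $\dbar$-operator is the $(0,1)$-part of $\dd-A^*$, namely $\dbar-\bar s P\bar\alpha$ on $Z$ and $\dbar-tP\,\dd\bar z^1$ on $W$. This is the same operator for both members of each pair, so $\bar\pi$ agrees as well. Hence $\Pi$ takes the same value on the two classes.

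\emph{Membership in $\Mgs$.} By \cref{thm:main}(3),(4), for $s,t\notin(\pi/2)\ZZ$ both holomorphic bundles of the non-flat connection are stable. By the previous paragraph the flat connection has isomorphic induced and adjoint bundles, so it lies in $\Mgs$ too. Both connections are NHYM with zero Einstein constant: the flat one trivially, the non-flat one by \cref{thm:main}. Both live on the smoothly trivial bundles of that theorem.

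The main point needing care is the descent and intrinsic meaning of the identifications. I need to check that $U$ is genuinely a global automorphism of the descended bundle. I also need that the adjoint structure, computed in the frame of $h_0$, is being compared on the same smooth bundle $\overline E^{\,*}$ and not only up to a metric-dependent identification. Since $h_0$ is common to both connections and $U$ is $h_0$-unitary, this should be routine, but it is the step I would verify most carefully.
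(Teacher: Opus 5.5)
Your proposal is correct and follows the paper's proof essentially step for step: the same abelian flat partner descended via the lift $S$, the same constant rotation $U=e^{-\pi R/4}$ (commuting with $S$) identifying the induced bundles, equality of the $(1,0)$ parts identifying the adjoint bundles, stability transferred from \cref{thm:main}, and curvature separating the two classes. The point you flagged is indeed routine, because the $(0,1)$ operator of the conjugate-dual connection is determined by $D^{1,0}$ alone, so the adjoint structures of the two connections coincide with no metric-dependent identification involved.
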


Here $\rho$ records a harmonic class after fixing one holomorphic
structure, whereas $\Pi$ records the two holomorphic isomorphism
classes separately. These global phenomena coexist with the local
embeddings of \cite[Propositions~2.16 and~2.19]{KV}.
Our final result quantifies local flatness. Norms use the base metric
and the Hilbert--Schmidt metric induced by the Hermitian metric
on $E$.

\Needspace{10\baselineskip}
\begin{mainproposition}\label{prop:energy-gap}
Let \(\nabla\) be a flat unitary Chern connection on a rank-\(r\)
Hermitian bundle \((E,h)\) over a compact connected K\"ahler manifold
\((X,\omega)\) of positive complex dimension. If \(D=\nabla+\varphi\),
\(\varphi\in\Omega^{1,0}(X,\End E)\), is NHYM with zero Einstein
constant, then
\begin{equation}\label{eq:energy-statement}
 \|F_D\|_{L^2}^2=\|\varphi\wedge\varphi\|_{L^2}^2.
\end{equation}
There is \(c_{X,r}>0\), depending only on the base geometry and rank,
such that
\[
 F_D\ne0\quad\Longrightarrow\quad
 \|\varphi\|_{C^0}\ge c_{X,r}\sqrt{\lambda_1^{1,0}(\nabla)},
\]
where \(\lambda_1^{1,0}(\nabla)\) is the first positive eigenvalue of
\(\Delta_{\partial_\nabla}\) on \(\End E\)-valued \((1,0)\)-forms.
Below this threshold, \(\varphi\) is \(\Delta_{\partial_\nabla}\)-harmonic
and holomorphic,
\(\varphi\wedge\varphi=0\), and \(D\) is flat.
Such a flat Chern connection can be obtained by choosing a
Hermitian--Einstein metric whenever the induced holomorphic bundle
is polystable with \(c_1=c_2=0\) in real cohomology.
\end{mainproposition}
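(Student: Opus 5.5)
The plan is to write the NHYM equations for $D=\nabla+\varphi$ in terms of $\partial_\nabla,\dbar_\nabla$ and then use the Kähler identities for the flat unitary connection $\nabla$. Since $F_\nabla=0$ we have
\[
F_D=\partial_\nabla\varphi+\dbar_\nabla\varphi+\varphi\wedge\varphi .
\]
The $(2,0)$-part must vanish, so $\partial_\nabla\varphi=-\varphi\wedge\varphi$, and $F_D=\dbar_\nabla\varphi$. The trace condition $i\Lambda_\omega\dbar_\nabla\varphi=0$, together with $[\Lambda,\dbar_\nabla]=-i\partial_\nabla^*$ (valid for any Chern connection) and $\Lambda\varphi=0$, gives $\partial_\nabla^*\varphi=0$. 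Because $\varphi$ has type $(1,0)$, we also have $\dbar_\nabla^*\varphi=0$ for type reasons. Flatness of $\nabla$ gives $\Delta_{\partial_\nabla}=\Delta_{\dbar_\nabla}$ by the Nakano-type Kähler identities. Combining these,
\[
\|F_D\|^2=\|\dbar_\nabla\varphi\|^2=\langle\Delta_{\dbar_\nabla}\varphi,\varphi\rangle=\langle\Delta_{\partial_\nabla}\varphi,\varphi\rangle=\|\partial_\nabla\varphi\|^2=\|\varphi\wedge\varphi\|^2,
\]
which is \eqref{eq:energy-statement}.

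For the gap I would split $\varphi=H_\nabla\varphi+\varphi'$ with $\varphi'\perp\ker\Delta_{\partial_\nabla}$. The harmonic part is annihilated by $\partial_\nabla$ and, by the equality of Laplacians, also by $\dbar_\nabla$. Hence $F_D=\dbar_\nabla\varphi'$, and $F_D\neq0$ forces $\varphi'\neq0$. The spectral gap gives
\[
\lambda_1^{1,0}\|\varphi'\|^2\le\langle\Delta_{\partial_\nabla}\varphi',\varphi'\rangle=\|\partial_\nabla\varphi'\|^2=\|\varphi\wedge\varphi\|^2 .
\]
The pointwise bound $|\varphi\wedge\varphi|\le C_{X,r}|\varphi|^2$ then gives
\[
\|\varphi\wedge\varphi\|\le C\|\varphi\|_{C^0}\|\varphi\|_{L^2}.
\]
The main obstacle is that the right side involves all of $\varphi$, not just $\varphi'$, so the harmonic part must be absorbed. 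First I would try to exploit $\varphi\wedge\varphi=-\partial_\nabla\varphi'$, which is $\partial_\nabla$-exact:
\[
\|\varphi\wedge\varphi\|^2=-\langle\varphi\wedge\varphi,\partial_\nabla\varphi'\rangle,
\]
and bound this by Cauchy–Schwarz as $\|\varphi\wedge\varphi\|\,\|\partial_\nabla\varphi'\|$. This has to be combined with a bilinear estimate in which one factor is $\varphi'$, using $\varphi\wedge\varphi=\varphi\wedge\varphi'+\varphi'\wedge H\varphi+H\varphi\wedge H\varphi$. The term $H\varphi\wedge H\varphi$ is the delicate one. I would try to show it is $\partial_\nabla$-closed and $\partial_\nabla^*$-closed, hence orthogonal to the exact form $\varphi\wedge\varphi$. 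If it is, then $\|\partial_\nabla\varphi'\|\le C\|\varphi\|_{C^0}\|\varphi'\|$. Inserting this into the spectral inequality gives $\sqrt{\lambda_1}\le C\|\varphi\|_{C^0}$, i.e.\ the threshold with $c_{X,r}=1/C$. If this orthogonality fails, the fallback is to run the argument with a Poincaré-type inequality applied directly to the exact $2$-form $\varphi\wedge\varphi$, using the gap of $\Delta_{\partial_\nabla}$ on $(2,0)$-forms.

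Below the threshold the inequality forces $\varphi'=0$. Then $\varphi$ is $\Delta_{\partial_\nabla}$-harmonic, so $\partial_\nabla\varphi=0$ and $\dbar_\nabla\varphi=0$; in particular $\varphi$ is holomorphic. This gives $\varphi\wedge\varphi=-\partial_\nabla\varphi=0$ and $F_D=\dbar_\nabla\varphi=0$, so $D$ is flat.

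For the last sentence of the statement I would invoke Donaldson–Uhlenbeck–Yau: a polystable bundle admits a Hermitian–Einstein metric. With $c_1=c_2=0$ in real cohomology, equality holds in the Lübke inequality, which forces the Chern connection to be flat \cite{Lubke}.
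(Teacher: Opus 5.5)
Your energy identity and the structure of your gap argument are the paper's. You split off the harmonic part, use $\partial_\nabla\varphi'=-\varphi\wedge\varphi$, discard $H_\nabla\varphi\wedge H_\nabla\varphi$ by orthogonality, and bound the remaining bilinear terms against the spectral gap. The orthogonality you only hope for does hold, and you already have the tools to prove it. Since $\dbar_\nabla H_\nabla\varphi=0$, the form $H_\nabla\varphi\wedge H_\nabla\varphi$ is a $\dbar_\nabla$-closed $(2,0)$-form, and it is $\dbar_\nabla^*$-closed by type. It is therefore $\Delta_{\dbar_\nabla}$-harmonic, hence $\Delta_{\partial_\nabla}$-harmonic by the equality of Laplacians. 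In particular $\partial_\nabla^*(H_\nabla\varphi\wedge H_\nabla\varphi)=0$, so it is $L^2$-orthogonal to $\partial_\nabla\varphi'$. Drop the fallback: it would bring in a spectral gap on $(2,0)$-forms, which is not the quantity in the statement.

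The genuine gap is the step $\|\partial_\nabla\varphi'\|\le C\|\varphi\|_{C^0}\|\varphi'\|$. After the orthogonality, Cauchy--Schwarz gives
\[
 \|\partial_\nabla\varphi'\|_{L^2}
 \le\|\varphi\wedge\varphi'+\varphi'\wedge H_\nabla\varphi\|_{L^2}
 \le C_{\wedge,r}\bigl(\|\varphi\|_{C^0}+\|H_\nabla\varphi\|_{C^0}\bigr)\|\varphi'\|_{L^2}.
\]
So you need $\|H_\nabla\varphi\|_{C^0}\le A\|\varphi\|_{C^0}$ with $A$ independent of $\nabla$. An $L^2$-orthogonal projection gives no sup-norm control by itself. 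Moreover, $\ker\Delta_{\partial_\nabla}$ can change dimension with $\nabla$, so finite-dimensionality only yields a background-dependent constant. That contradicts the claim that $c_{X,r}$ depends only on the base geometry and rank.

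The missing idea, which the paper supplies, is an elliptic estimate for the harmonic part that is uniform in $\nabla$. Work in a parallel unitary frame on a fixed simply connected coordinate ball. There a $\Delta_{\partial_\nabla}$-harmonic $(1,0)$-form is $\dbar_\nabla$-closed, so its coefficients are holomorphic. Mean-value estimates on a fixed finite cover then give $\|v\|_{C^0}\le C_{X,r}\|v\|_{L^2}$ for every such $v$, uniformly over flat unitary backgrounds. Consequently $\|H_\nabla\varphi\|_{C^0}\le C_{X,r}\|\varphi\|_{L^2}\le A_{X,r}\|\varphi\|_{C^0}$ with $A_{X,r}=C_{X,r}\operatorname{Vol}(X)^{1/2}$. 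This yields $c_{X,r}=[C_{\wedge,r}(1+A_{X,r})]^{-1}$.

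A minor point on the last sentence: the Chern--Weil argument of L\"ubke needs $\dim_{\CC}X\ge2$. On a curve, flatness of the Hermitian--Einstein connection is immediate, because a $(1,1)$-form with zero contraction vanishes.
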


The constant $c_{X,r}$ is uniform over flat unitary backgrounds;
the threshold still depends on their spectral gap. For a fixed
polystable holomorphic structure with $c_1=c_2=0$, all NHYM
connections sufficiently close to its flat Chern connection and
with the same $(0,1)$ part are flat.

The constant-coefficient model is a specialization of the complex
anti-self-dual ansatz of Dallagnol--Jardim
\cite[\S3, equations~(26)--(27), p.~967]{DJ}.
Writing their coordinates as $r_\mu$, the substitution
$(r_1,r_2,r_3,r_4)=(x^1,x^2,y^1,-y^2)$, with
$A_1=tP$ and $A_2=tQ$, gives
$tP\,\dd z^1+tQ\,\dd\bar z^2$. The standard orientation in
the $r$-coordinates becomes the complex orientation of $(z^1,z^2)$.
The doubly periodic Hitchin solutions
of Mosna--Jardim
\cite[equation~(5), equations~(19)--(21), \S VII]{MJ}
also give compact zero-Chern examples after periodicizing the two
translation-invariant directions and choosing the anti-self-dual
orientation. This compactification is a consequence of their
formulas. The additional features established here are the
stable descents and the explicit global moduli-map fibres.
For recent work on harmonic metrics and local moduli geometry,
see \cite{PSZ,WZ,Wang}.

The proofs follow in the order of the statements.
\Cref{sec:constructions} proves \cref{thm:main},
\cref{sec:rho} proves \cref{thm:rho-main},
\cref{sec:projections} proves \cref{prop:C}, and
\cref{sec:energy} proves \cref{prop:energy-gap}.

\section{The constructions}\label{sec:constructions}

We first record the common stability criterion.

\begin{lemma}\label{lem:flat-unitary}
Let \(\nabla\) be a flat unitary connection on a bundle over a
compact connected K\"ahler manifold. Its holomorphic bundle
\((E,\nabla^{0,1})\) is polystable of degree zero, and is stable
exactly when its holonomy representation is irreducible.
Every holomorphic section is parallel; consequently every
holomorphic homomorphism between flat unitary bundles is parallel.
\end{lemma}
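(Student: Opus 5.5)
The plan is to begin with the last assertion, because the rest reduces to it. Let \(\nabla\) be a flat unitary connection on a Hermitian bundle \((E,h)\) and \(s\) a section with \(\nabla^{0,1}s=0\). Flatness gives \(F_\nabla=0\), so the Kähler identity \([\Lambda_\omega,\partial_\nabla]=i\,\bar\partial_\nabla^*\) (valid for any Hermitian connection) yields the Bochner--Weitzenböck formula
\[
 \partial_\nabla^*\partial_\nabla s=\bar\partial_\nabla^*\bar\partial_\nabla s+i\Lambda_\omega F_\nabla s=0 .
\]
Pairing with \(s\) and integrating gives \(\|\partial_\nabla s\|_{L^2}^2=0\). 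Hence \(\nabla s=0\).

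For two flat unitary bundles \(E_1,E_2\), the bundle \(\Hom(E_1,E_2)\) with the induced connection is again flat and unitary. Its \((0,1)\) part is the induced holomorphic structure, so holomorphic homomorphisms are holomorphic sections of it and are therefore parallel.

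For polystability, we use that the holonomy group lies in \(U(r)\) and that representations of a group into \(U(r)\) are completely reducible. So \(E\) splits \(h\)-orthogonally into \(\nabla\)-parallel subbundles \(E=\bigoplus E_j\) on which the holonomy acts irreducibly. Each \(E_j\) is a flat unitary, hence holomorphic, subbundle. All \(c_1(E_j)\) vanish in real cohomology by Chern--Weil, so every degree is zero.

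It remains to show each irreducible summand is stable. Take a coherent subsheaf \(\mathcal F\subset E_j\) of rank \(0<k<\operatorname{rk}E_j\) and suppose \(\deg\mathcal F\ge0\). Then \(\det\mathcal F\) gives a nonzero holomorphic map \(\det\mathcal F\to\Lambda^kE_j\).

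\emph{Main obstacle.} This step is where the argument must be careful, since \(\mathcal F\) need not be a subbundle. I would handle it in the standard way, as in Kobayashi's or Lübke's proof that Hermitian--Einstein bundles are polystable. One route uses the Chern--Weil formula for the degree of a saturated subsheaf: on the complement of a codimension-two set the second fundamental form \(\beta\) is defined, and
\[
 \deg\mathcal F=\frac{1}{2\pi}\int\bigl(\operatorname{tr}(i\Lambda F_\nabla|_{\mathcal F})-|\beta|^2\bigr)\frac{\omega^n}{n!}=-\frac{1}{2\pi}\|\beta\|^2\le0 .
\]
Equality forces \(\beta=0\). Then \(\mathcal F\) is \(\nabla\)-parallel off a codimension-two set and extends to a parallel subbundle, contradicting irreducibility. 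Hence every proper subsheaf has negative degree and \(E_j\) is stable.

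An alternative route applies the first step to a suitable twist of \(\Lambda^kE_j\) by the flat unitary line bundle associated with \(\det\mathcal F\) when its degree is zero. This produces a parallel decomposable \(k\)-vector, hence a parallel rank-\(k\) subbundle, and the same contradiction.

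For the converse, if the holonomy is reducible, the parallel splitting above is a nontrivial holomorphic splitting into degree-zero summands, so the bundle is not stable. If the holonomy is irreducible, \(E\) is a single summand and is stable by the argument just given. This proves the stated equivalence.
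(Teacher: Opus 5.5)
Your proof is correct. The first half, the Bochner identity for a flat unitary connection followed by applying it to the flat unitary \(\Hom\) bundle, is the same as the paper's; the stability part takes a genuinely different route.

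\textbf{The paper's route.} It cites Kobayashi for polystability of the Hermitian--Einstein (here flat) bundle. It then uses the parallel-section statement for \(\End E\) to identify \(H^0(X,\End E)\) with the commutant of the holonomy group. It finishes with the criterion that a polystable bundle is stable exactly when it is simple, together with Schur's lemma for irreducible holonomy and a non-scalar invariant orthogonal projection for reducible holonomy.

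\textbf{Your route.} You split \(E\) into holonomy-irreducible, parallel, orthogonal summands. You prove each summand stable directly from the Chern--Weil degree formula for a saturated subsheaf, with irreducibility excluding \(\beta=0\). For the converse you use a parallel degree-zero summand as a destabilizing subbundle, so you never need the simple-versus-stable criterion.

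\textbf{Comparison.} In effect you reprove, in the flat case, the part of Kobayashi's theorem that the paper quotes. The price is the analytic input on saturated subsheaves. You should pass to the saturation first, which only raises the degree and keeps the rank. Extension across the codimension-two singular set \(S\) then amounts to surjectivity of \(\pi_1(X\setminus S)\to\pi_1(X)\) for a flat connection. The paper's version is shorter and reduces everything after the citation to linear algebra. It also uses the parallel-section assertion essentially, whereas in your argument it serves only the \(\Hom\) statement.

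\textbf{A correction to your alternative route.} A degree-zero line bundle such as \(\det\mathcal F\) need not carry a flat unitary structure when \(\dim X\ge2\), because its real \(c_1\) can be a nonzero primitive class. Instead, give \(\det\mathcal F\) a metric with constant \(i\Lambda_\omega F\). Your Bochner formula only needs \(\Lambda_\omega F=0\), not \(F=0\), to force holomorphic sections of \((\det\mathcal F)^{-1}\otimes\Lambda^kE_j\) to be parallel. The same formula with a negative constant excludes \(\deg\mathcal F>0\), a case that route did not address. This is Kobayashi's original argument.
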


\begin{proof}
For a flat unitary connection the K\"ahler identities give
\(\Delta_{\partial_\nabla}=\Delta_{\dbar_\nabla}\). For a
holomorphic section \(u\),
\[
 0=\|\dbar_\nabla u\|_{L^2}^2
  =\langle\Delta_{\dbar_\nabla}u,u\rangle_{L^2}
  =\langle\Delta_{\partial_\nabla}u,u\rangle_{L^2}
  =\|\partial_\nabla u\|_{L^2}^2,
\]
so \(\nabla u=0\). Applying this to the flat unitary \(\Hom\)
bundle proves the homomorphism assertion. Since \(\nabla\) is
Hermitian--Einstein with zero constant, its holomorphic bundle is
polystable \cite[Chapter~V]{Kobayashi}. For any base point \(x\),
parallel transport identifies
\[
 H^0(X,\End E)\simeq
 \{B\in\End(E_x):BH=HB\text{ for every }H\in\operatorname{Hol}_x(\nabla)\}.
\]
A polystable bundle is stable exactly when these endomorphisms are
scalar. Schur's lemma proves this for irreducible holonomy;
reducible unitary holonomy has a non-scalar invariant orthogonal
projection, proving the converse.
\end{proof}

We use the notation of \cref{sec:intro}. All connection formulas
on finite covers are written in their standard frames, and wedge
products of endomorphism-valued forms include composition.
For a constant lift $C_a$ of a deck transformation $a$, a connection
$\dd+A$ descends precisely when
\begin{equation}\label{eq:descent-criterion}
 a^*A=C_aAC_a^{-1}.
\end{equation}
A unitary lift also preserves $h_0$. The quotient maps used below
are local isometries, so the NHYM and harmonic-metric equations
commute with descent.

\begin{proof}[Proof of \cref{thm:main}]

The involutions $\sigma$ and $\gamma$ square to unit lattice
translations. A fixed point would require respectively
$\tfrac12\in\ZZ+i\ZZ$ in the first or third coordinate,
which is impossible. Thus both quotients are smooth and their
invariant flat K\"ahler metrics descend. The involution $\tau$
is fixed-point-free because $C\to B$ is an unramified double
cover. Hence $g(c,z)=(\tau c,-z)$ is free as well.

The choice of $\alpha$ follows from Riemann--Hurwitz:
$2g(C)-2=2(2b-2)$, hence $g(C)=2b-1$. The invariant one-forms
are precisely $p^*H^0(B,K_B)$, so the anti-invariant subspace
has dimension $(2b-1)-b=b-1>0$. A nonzero form in this subspace
has a nonzero period: otherwise path integration would give a
global holomorphic primitive on the compact curve, forcing the
form to vanish. Rescaling one nonzero period gives
\eqref{eq:alpha-period}.

\Needspace{6\baselineskip}
\medskip\noindent\emph{(1) The torus.}

Put $A_t=tP\,\dd z^1+tQ\,\dd\bar z^2$. Because $\dd A_t=0$,
\[
 F_{D_t}=t^2K\,\dd z^1\wedge\dd\bar z^2.
\]
With our normalization,
\[
 \Lambda_{\omega_T}=-2i\sum_{j=1}^2
 \iota_{\partial/\partial\bar z^j}
 \iota_{\partial/\partial z^j},
 \qquad
 \Lambda_{\omega_T}(\dd z^1\wedge\dd\bar z^2)=0.
\]
Thus \(D_t\) is NHYM with zero Einstein constant, and its curvature
is nonzero when $t\ne0$. The bundle is trivial, so all
its integral Chern classes vanish.

Write \(z^a=x^a+iy^a\). For the standard metric \(h_0\), the
unitary and self-adjoint parts of \(D_t\) are
\[
 D_{t,h_0}=\dd+A_t^{\mathrm u},\qquad
 A_t^{\mathrm u}=itP\,\dd y^1-itQ\,\dd y^2,\qquad
 \psi_t=tP\,\dd x^1+tQ\,\dd x^2.
\]
In the flat real orthonormal frame,
\[
 D_{t,h_0}^*\psi_t
 =-\sum_a\bigl(\partial_a\psi_a+[A^{\mathrm u}_a,\psi_a]\bigr)=0:
\]
the coefficients are constant, and in each coordinate direction
one of \(A^{\mathrm u}_a,\psi_a\) is zero. Hence \(h_0\) is a
positive harmonic metric.

For irreducibility, put
\[
 e_\pm=\begin{pmatrix}1\\\pm i\end{pmatrix},\qquad
 L_\pm^K=\CC e_\pm,\qquad
 Ke_\pm=\pm2i e_\pm,\qquad Pe_\pm=e_\mp.
\]
A parallel smooth line is preserved by
\(F_{D_t}(\partial_{x^1},\partial_{x^2})=t^2K\). Since the
\(K\)-eigenvalues are distinct, continuity forces it to equal one
of \(L_\pm^K\) on any connected open set. But
\[
 D_{t,\partial_{x^1}}e_\pm=t e_\mp\notin L_\pm^K
 \qquad(t\ne0).
\]
Thus neither line is parallel, proving irreducibility on every
connected open subset.

The associated holomorphic structure and its Chern connection are
\begin{equation}\label{eq:chern-phi}
 \begin{aligned}
 D_t^{0,1}&=\dbar+tQ\,\dd\bar z^2,\qquad
 \nabla_t=\dd+tQ(\dd\bar z^2-\dd z^2),\\
 D_t-\nabla_t&=t(P\,\dd z^1+Q\,\dd z^2).
 \end{aligned}
\end{equation}
Diagonalizing \(Q\) splits this structure into two flat unitary
line bundles with Chern connections \(\dd\mp2it\,\dd y^2\).
Their \(y^2\)-holonomies are \(e^{\pm2it}\), and their other
periods are trivial. By \cref{lem:flat-unitary}, the two lines
are isomorphic exactly when \(e^{4it}=1\), and each is
holomorphically trivial exactly when \(e^{2it}=1\).
A holomorphic frame of a flat unitary bundle must be parallel
by \cref{lem:flat-unitary}, so the full sum is holomorphically
trivial precisely when its holonomy is trivial.
Thus the sum is polystable and not stable for every \(t\), and
is holomorphically trivial exactly when \(t\in\pi\ZZ\).

\Needspace{6\baselineskip}
\medskip\noindent\emph{(2) The bielliptic surface.}

Conjugation by \(P\) fixes \(P\) and negates \(Q\), whereas
\(\sigma^*\dd z^1=\dd z^1\) and
\(\sigma^*\dd\bar z^2=-\dd\bar z^2\). Hence
\(\sigma^*A_t=PA_tP^{-1}\), so \(D_t\) descends to \(E_Y\).
The metric \(h_0\) descends because \(P\) is unitary.
The periodic matrix
\[
 G(z)=\diag(1,e^{2\pi ix^1})
\]
satisfies \(G(\sigma z)=PG(z)\); its columns give a global
smooth frame of \(E_Y\). Thus the bundle is smoothly trivial
and all its integral Chern classes vanish.
The quotient is a local isometry, so the NHYM and harmonic-metric
equations descend. Non-flatness follows from the nonzero pulled-back
curvature. Any invariant line downstairs would pull back to one
upstairs, contradicting the torus proof.

The flat unitary connection \(\nabla_t\) in \eqref{eq:chern-phi}
is also equivariant and descends. Based at the image of \((0,0)\),
its \(y^2\)-circle holonomy is \(e^{2itQ}\).
The path \(u\mapsto(u,0)\), \(0\le u\le\tfrac12\), projects
to a closed loop whose holonomy is \(P\).
For \(M=P,Q\), set \(L_\pm(M)=\ker(M\mp\Id)\). Then
\[
 \left.e^{2itQ}\right|_{L_\pm(Q)}=e^{\pm2it}\Id,\qquad
 P L_\pm(Q)=L_\mp(Q),\qquad
 e^{2it}\ne e^{-2it}\ \Longleftrightarrow\ t\notin(\pi/2)\ZZ.
\]
Under this condition the only invariant lines of \(e^{2itQ}\)
are \(L_\pm(Q)\), and neither is preserved by \(P\). The
descended holonomy is irreducible, so \cref{lem:flat-unitary}
proves stability.

For the adjoint holomorphic structure, expressed through \(h_0\),
the flat unitary Chern connection upstairs is
\[
 \nabla_t^-=\dd+tP(\dd z^1-\dd\bar z^1).
\]
Its \((0,1)\) part is \(\dbar-tP\,\dd\bar z^1\).
Both this connection and the deck lift preserve the two
\(P\)-eigenlines. They descend to a splitting into flat unitary
line bundles of degree zero, proving polystability and failure
of stability on the adjoint side for every \(t\).

\Needspace{6\baselineskip}
\medskip\noindent\emph{(3) The two-sided stable surface.}

Let $\widetilde D^Z_{s,t}=\dd+sP\alpha+tQ\,\dd\bar z$ denote
the connection on $C\times T_1$ in part~(3).
Both \(\alpha\) and \(\dd\bar z\) change sign under \(g\),
and \(SPS=-P\), \(SQS=-Q\). Hence
\(\widetilde D^Z_{s,t}\) is equivariant and descends to \(E_Z\).
The standard metric descends because \(S\) is unitary.

To prove smooth triviality, let \(\eta\) be the flat sign line
bundle on \(B\) associated with \(C\to B\), and let
\(f:Z\to B\) be \([c,z]\mapsto p(c)\).
The cover \(C\times T_1\to Z\) is the pullback of \(C\to B\)
under \(f\): the map
$(c,z)\mapsto([c,z],c)$ identifies $C\times T_1$ with
$Z\times_B C$. Diagonalizing the involution \(S\) therefore gives
\[
 E_Z\simeq\underline{\CC}\oplus f^*\eta.
\]
The sign representation gives \(\eta^{\otimes2}\simeq\underline{\CC}\),
so
\[
 2c_1(\eta)=0,\qquad H^2(B,\ZZ)\simeq\ZZ
 \quad\Longrightarrow\quad c_1(\eta)=0.
\]
The classification of smooth complex line bundles makes \(\eta\)
smoothly trivial, hence \(E_Z\) is smoothly trivial as well.
All integral Chern classes consequently vanish.

Since a holomorphic one-form on a curve is closed,
\begin{equation}\label{eq:surface-curvature}
 F_{\widetilde D^Z_{s,t}}=stK\,\alpha\wedge\dd\bar z.
\end{equation}
In a product unitary coframe, \(\alpha\) belongs to the first
factor and \(\dd\bar z\) to the second. Thus
\[
 \Lambda F_{\widetilde D^Z_{s,t}}
 =stK\,\Lambda(\alpha\wedge\dd\bar z)=0,\qquad
 F_{\widetilde D^Z_{s,t}}\not\equiv0\quad(st\ne0).
\]
The curvature is therefore primitive of type \((1,1)\).
On an open set where \(\alpha\ne0\), a parallel line must again
equal \(L_\pm^K\). For a real tangent vector \(X\) to \(C\)
with \(\alpha(X)\ne0\),
\[
 \widetilde D^Z_{s,t,X}e_\pm
 =s\alpha(X)e_\mp\notin L_\pm^K.
\]
This excludes a parallel line upstairs, and a line downstairs
would pull back to one upstairs. Thus the connection is irreducible both on the cover and on $Z$.

For real \(s,t\), its harmonic splitting is
\[
 A^{\mathrm u}=isP\operatorname{Im}\alpha-itQ\,\dd y,
 \qquad
 \psi=sP\operatorname{Re}\alpha+tQ\,\dd x.
\]
Write \(\alpha_{\mathrm R}=\operatorname{Re}\alpha\),
\(\alpha_{\mathrm I}=\operatorname{Im}\alpha\),
and choose a local product real orthonormal frame with
\(e_1,e_2\in TC\), \(e_3=\partial_x\), \(e_4=\partial_y\).
Since \(\alpha_{\mathrm R},\alpha_{\mathrm I}\) are harmonic on \(C\),
\[
 \begin{aligned}
 \dd^*\psi
 &=sP\,\dd_C^*\alpha_{\mathrm R}+tQ\,\dd_{T_1}^*\dd x=0,\\
 \sum_{j=1}^4[A^{\mathrm u}(e_j),\psi(e_j)]
 &=is^2\sum_{j=1}^2 \alpha_{\mathrm I}(e_j)\alpha_{\mathrm R}(e_j)[P,P]=0,\\
 (\dd+A^{\mathrm u})^*\psi
 &=\dd^*\psi-\sum_{j=1}^4[A^{\mathrm u}(e_j),\psi(e_j)]=0.
 \end{aligned}
\]
Here the elliptic terms vanish because
\(A^{\mathrm u}(e_3)=0\) and \(\psi(e_4)=0\).
Thus \(h_0\) is harmonic without requiring \(\alpha\) to be
parallel; these equations descend to \(Z\).

The two holomorphic structures have flat unitary Chern connections
\[
 \nabla_t^+=\dd+tQ(\dd\bar z-\dd z),\qquad
 \nabla_s^-=\dd+sP(\alpha-\bar\alpha).
\]
Both are equivariant. The first has holonomy \(e^{2itQ}\)
around the elliptic \(y\)-circle. Choose a path \(\beta\) in
\(C\) from \(c_0\) to \(\tau c_0\). With elliptic coordinate
\(0\), it projects to a loop with holonomy \(S\). Since
\[
 S L_\pm(Q)=L_\mp(Q),\qquad
 \left.e^{2itQ}\right|_{L_\pm(Q)}=e^{\pm2it}\Id,
\]
these matrices have no common invariant line when
\(t\notin(\pi/2)\ZZ\).

For \(\nabla_s^-\), the normalization \(\int_\ell\alpha=i\)
gives holonomy \(e^{-2isP}\) around \(\ell\).
Along the same deck loop the holonomy is \(SH_s\), where
\[
 I_\beta:=\int_\beta(\alpha-\bar\alpha)\in i\RR,\qquad
 H_s=e^{-sI_\beta P},\qquad
 \left.H_s\right|_{L_\pm(P)}=e^{\mp sI_\beta}\Id.
\]
For real \(s\), \(H_s\) is unitary and preserves \(L_\pm(P)\),
whereas
\[
 (SH_s)L_\pm(P)=L_\mp(P),\qquad
 \left.e^{-2isP}\right|_{L_\pm(P)}=e^{\mp2is}\Id.
\]
If \(s\notin(\pi/2)\ZZ\), the latter eigenvalues are distinct,
so again there is no common invariant line. Both representations
are irreducible, and \cref{lem:flat-unitary} proves stability on
both sides.

\Needspace{6\baselineskip}
\medskip\noindent\emph{(4) The flat threefold.}

The identities \(\gamma^*A_t=-A_t=SA_tS^{-1}\) prove descent,
and unitarity of \(S\) proves descent of \(h_0\).
Put
\[
 V=\frac1{\sqrt2}\begin{pmatrix}i&-i\\1&1\end{pmatrix},
 \qquad S=V\diag(1,-1)V^*,\qquad
 G(z)=V\diag(1,e^{2\pi ix^3})V^*.
\]
This periodic matrix satisfies \(G(\gamma z)=SG(z)\), so its
columns give a global smooth frame of \(E_W\). In particular,
the integral Chern classes vanish.

The curvature and the harmonic splitting are exactly those of
the torus proof, with zero components in the third factor.
The curvature is therefore nonzero and primitive of type
\((1,1)\) when \(t\ne0\), and \(h_0\) is harmonic.
The same local argument using the \(K\)-eigenlines proves
irreducibility on \(T_3\), hence on \(W\).

The associated and adjoint holomorphic structures have
equivariant flat unitary Chern connections
\begin{equation}\label{eq:two-chern}
 \widetilde\nabla_t^+=\dd-2itQ\,\dd y^2,\qquad
 \widetilde\nabla_t^-=\dd+2itP\,\dd y^1.
\end{equation}
Based at the image of \((0,0,0)\), the path
\(u\mapsto(0,0,u)\), \(0\le u\le\tfrac12\), has holonomy
\(S\) for both descents. The \(y^2\)-circle for the first
connection has holonomy \(e^{2itQ}\); the \(y^1\)-circle for
the second has holonomy \(e^{-2itP}\).
The relevant eigenlines satisfy
\[
 \begin{aligned}
 \left.e^{2itQ}\right|_{L_\pm(Q)}&=e^{\pm2it}\Id,
 &S L_\pm(Q)&=L_\mp(Q),\\
 \left.e^{-2itP}\right|_{L_\pm(P)}&=e^{\mp2it}\Id,
 &S L_\pm(P)&=L_\mp(P).
 \end{aligned}
\]
For \(t\notin(\pi/2)\ZZ\), neither representation has a common
invariant line; \cref{lem:flat-unitary} gives stability on both
sides. At \(t=k\pi/2\), \(k\in\ZZ\),
\[
 e^{2itQ}=e^{-2itP}=(-1)^k\Id.
\]
All torus translations then have scalar holonomy. The two
\(S\)-eigenlines are invariant under the entire holonomy group,
so both bundles split into degree-zero flat unitary line bundles
and are polystable but not stable.

\Needspace{6\baselineskip}
\medskip\noindent\emph{Minimal rank and dimension.}
In complex dimension one, an endomorphism-valued \((1,1)\)-form
satisfies
\[
 F_D=(\Lambda_\omega F_D)\,\omega=0.
\]
For a line bundle with \(c_1=0\) in real cohomology, the curvature
\(F\) is a closed complex-valued form with \([F]=0\). Thus
\(F=\dd\beta\) for a complex-valued one-form \(\beta\).
In complex dimension \(n\ge2\), primitivity and
\(\dd F=\dd\omega=0\) give
\[
 *F=-\frac{F\wedge\omega^{n-2}}{(n-2)!},\qquad
 \dd^*F=0.
\]
Using the Hermitian \(L^2\) inner product on complex-valued forms,
\[
 \|F\|_{L^2}^2
 =\langle\dd\beta,F\rangle_{L^2}
 =\langle\beta,\dd^*F\rangle_{L^2}=0.
\]
This proves the minimal rank and dimension in \cref{thm:main}.
\end{proof}

\Needspace{8\baselineskip}
\section{The fixed-bundle map}\label{sec:rho}

We recall the fixed-bundle model for a stable holomorphic bundle
$\cE$ with flat Chern connection $\nabla$, as in our examples.
After identifying the induced bundle with $\cE$, each class has
a unique representative
\[
 D=\nabla+\varphi,
 \qquad \varphi\in\Omega^{1,0}(X,\End E).
\]
Indeed, a gauge transformation preserving the fixed $(0,1)$ part
is holomorphic, and stability gives
\[
 \dbar_{\End\cE}g=0
 \quad\Longrightarrow\quad
 g\in\operatorname{Aut}(\cE)=\CC^*\Id.
\]
These constant scalars act trivially on connections. The NHYM equation
and the K\"ahler identity give
\[
 \partial_\nabla^*\varphi
 =i\Lambda_\omega\dbar_\nabla\varphi=0
\]
\cite[Theorem~2.15, p.~291]{KV}.

Since $\partial_\nabla^2=0$, also
$(\partial_\nabla^*)^2=0$. Hodge theory identifies the degree-one
cohomology of this decreasing-degree complex with
\[
 \frac{\ker(\partial_\nabla^*\colon
       \Omega^{1,0}(\End E)\to\Omega^0(\End E))}
      {\operatorname{im}(\partial_\nabla^*\colon
       \Omega^{2,0}(\End E)\to\Omega^{1,0}(\End E))}
 \cong \mathcal H^{1,0}_{\partial_\nabla}(\End E).
\]
Taking the Hermitian adjoint identifies this harmonic space
complex-linearly with $\overline{H^1(X,\End\cE)}$.
Thus harmonic projection records the degree-one
$\partial_\nabla^*$-cohomology class:
\[
 H_\nabla(\varphi+\partial_\nabla^*\beta)
 =H_\nabla\varphi,
 \qquad \beta\in\Omega^{2,0}(X,\End E).
\]

The fixed-bundle space has its reduced complex analytic structure
as a fibre of the holomorphic map $\pi$
\cite[Corollary~2.9 and Lemma~2.10]{KV}. In this
unique-representative model, $\rho_\cE$ is holomorphic because
harmonic projection is complex-linear. Near $[\nabla]$, it is a
closed embedding, identified locally with a Kuranishi embedding
via the adjoint holomorphic projection
\cite[Proposition~2.16 and Corollary~2.18]{KV}.

\begin{proof}[Proof of \cref{thm:rho-main}]
Fix $t\in\RR\setminus(\pi/2)\ZZ$. By \cref{thm:main}, the
induced holomorphic bundle $\cE_t^Z$ is stable and has flat Chern
connection
\[
 \nabla_t^+=\dd+tQ(\dd\bar z-\dd z).
\]
Write $H=H_{\nabla_t^+}$ for harmonic projection.
For every $s\in\CC$, the connection
\[
 \widetilde D^Z_{s,t}=\dd+sP\alpha+tQ\,\dd\bar z
\]
is equivariant: the deck involution negates both one-forms, while
$SPS=-P$ and $SQS=-Q$. Its curvature and difference from the
Chern connection are
\begin{equation}\label{eq:rho-surface-curvature}
 F_{\widetilde D^Z_{s,t}}=stK\,\alpha\wedge\dd\bar z,
 \qquad
 \varphi_{s,t}=sP\alpha+tQ\,\dd z.
\end{equation}
The mixed curvature is primitive, and the $(0,1)$ part is fixed:
\[
 \Lambda_\omega F_{\widetilde D^Z_{s,t}}=0,
 \qquad
 (\widetilde D^Z_{s,t})^{0,1}
 =\dbar+tQ\,\dd\bar z,
 \qquad
 F_{\widetilde D^Z_{s,t}}=0\ \Longleftrightarrow\ s=0.
\]
Hence all descents belong to $\NHYM(\cE_t^Z)$.

The essential identity is
\begin{equation}\label{eq:surface-coexact}
 \partial_{\nabla_t^+}^*(R\alpha\wedge\dd z)
 =-4tP\alpha.
\end{equation}
To verify it, split the formal adjoint according to the product
factors. On the curve, $\partial_C^*\alpha=0$ because
$\alpha$ is holomorphic and harmonic. On the square elliptic
factor, $|\dd z|^2=2$, and the $(0,1)$ connection coefficient
is $tQ$. Thus
\[
 \begin{aligned}
 \partial_{\nabla_t^+}^*(R\alpha\wedge\dd z)
 &=-2\iota_{\partial/\partial z}
       \bigl(t[Q,R]\alpha\wedge\dd z\bigr)\\
 &=2t[Q,R]\alpha=-4tP\alpha,
 \end{aligned}
\]
where $[Q,R]=-2P$ and
$\iota_{\partial/\partial z}(\alpha\wedge\dd z)=-\alpha$.
Both sides descend, since $SRS=R$, $SPS=-P$, and the two
one-forms change sign. Formal adjoints commute with descent
under this finite local isometry. Since $Q\,\dd z$ is
descended and parallel,
\[
 H(P\alpha)=0,
 \qquad H(Q\,\dd z)=Q\,\dd z,
 \qquad
 \rho_{\cE_t^Z}([\mathcal D^Z_{s,t}])
 =H\varphi_{s,t}=tQ\,\dd z.
\]

A complex gauge transformation identifying parameters $s,s'$
must preserve their common $(0,1)$ part. Therefore
\[
 \begin{aligned}
 g\mathcal D^Z_{s,t}g^{-1}=\mathcal D^Z_{s',t}
 &\ \Longrightarrow\ \dbar_{\End\cE_t^Z}g=0\\
 &\ \Longrightarrow\ g=c\Id
 \ \Longrightarrow\ (s-s')P\alpha=0
 \ \Longrightarrow\ s=s'.
 \end{aligned}
\]
The holomorphic family also has a nonzero tangent in the quotient.
Indeed, if $P\alpha$ were an infinitesimal gauge direction, there
would be $f\in\Omega^0(Z,\End E_Z)$ with
\[
 \begin{aligned}
 \dd_{\mathcal D^Z_{s,t}}f=P\alpha
 &\ \Longrightarrow\ \dbar_{\End\cE_t^Z}f=0\\
 &\ \Longrightarrow\ f=c\Id
 \ \Longrightarrow\ \dd_{\mathcal D^Z_{s,t}}f=0,
 \end{aligned}
\]
contrary to $P\alpha\ne0$. If $\xi_s$ denotes this tangent,
then
\[
 \xi_s\ne0,
 \qquad
 (\dd\rho_{\cE_t^Z})_{[\mathcal D^Z_{s,t}]}(\xi_s)
 =H(P\alpha)=0.
\]
Thus $\rho$ contracts a nonconstant complex curve.

For complex $s$, the adjoint holomorphic structure and its flat
unitary Chern connection are
\[
 \dbar-\bar sP\bar\alpha,
 \qquad
 \nabla_s^-=\dd+sP\alpha-\bar sP\bar\alpha.
\]
In the $P$-eigenbasis, the normalized period
$\int_\ell\alpha=i$ gives
\[
 A_s=\operatorname{Hol}_{\nabla_s^-}(\ell)
 =e^{-2i(\operatorname{Re}s)P}
 =\diag\bigl(e^{-2i\operatorname{Re}s},
             e^{2i\operatorname{Re}s}\bigr).
\]
A path from $c_0$ to $\tau c_0$, with elliptic coordinate zero,
projects to a loop on $Z$. Its holonomy is $J_s=SH_s$, where
$H_s$ is unitary and diagonal in this basis. Writing $L_+,L_-$
for the two $P$-eigenlines, we obtain
\[
 J_sL_+=L_-,\qquad J_sL_-=L_+,
 \qquad
 A_s\text{ has distinct eigenvalues}
 \ \Longleftrightarrow\
 \operatorname{Re}s\notin(\pi/2)\ZZ.
\]
Under this nonresonance condition, an $A_s$-invariant line must
be $L_+$ or $L_-$, and neither is $J_s$-invariant. The holonomy
representation is therefore irreducible, so
\cref{lem:flat-unitary} gives stability of the adjoint bundle.
The associated bundle is already stable. Choose
$0<\varepsilon<\operatorname{dist}(1,(\pi/2)\ZZ)$. Then
\[
 |s-1|<\varepsilon
 \quad\Longrightarrow\quad
 [\mathcal D^Z_{s,t}]\in\Mgs,
 \qquad
 \rho_{\cE_t^Z}([\mathcal D^Z_{s,t}])=tQ\,\dd z.
\]
This contracted disk rules out local injectivity and
\'etaleness on $\Mgs$.

For connectedness in the ambient fixed-bundle space, consider
the equivariant family
\[
 \widetilde D^Z_{u,v,t}
 =\dd+uP\alpha+vQ\,\dd z+tQ\,\dd\bar z.
\]
Its curvature satisfies
\[
 \begin{aligned}
 F^{2,0}_{\widetilde D^Z_{u,v,t}}
 &=uvK\,\alpha\wedge\dd z,\\
 F^{1,1}_{\widetilde D^Z_{u,v,t}}
 &=utK\,\alpha\wedge\dd\bar z,\qquad
 F^{0,2}_{\widetilde D^Z_{u,v,t}}=0,
 \end{aligned}
\]
and the mixed term is primitive. Consequently,
\[
 \widetilde D^Z_{u,v,t}\text{ is NHYM}
 \ \Longleftrightarrow\ uv=0,
 \qquad
 \rho_{\cE_t^Z}([\mathcal D^Z_{u,v,t}])=(v+t)Q\,\dd z
 \quad(uv=0).
\]
For any $s\in\CC$, the parameter path
\[
 \Gamma_s(r)=
 \begin{cases}
  (0,(2r-1)t),&0\le r\le\tfrac12,\\
  ((2r-1)s,0),&\tfrac12\le r\le1
 \end{cases}
\]
lies on these two axes. Its descended connections join
\[
 \mathcal D^Z_{0,-t,t}=\nabla_t^+
 \quad\longrightarrow\quad
 \mathcal D^Z_{0,0,t}=\mathcal D^Z_{0,t}
 \quad\longrightarrow\quad
 \mathcal D^Z_{s,0,t}=\mathcal D^Z_{s,t}.
\]
Thus the contracted line lies in the connected component of
$[\nabla_t^+]$ in $\NHYM(\cE_t^Z)$.

It remains to check the target in \cref{q:KV-global-map}.
The flat family through the Hermitian--Einstein point satisfies
\[
 F_{\nabla_t^++aQ\,\dd z}=0,
 \qquad
 \rho_{\cE_t^Z}([\nabla_t^++aQ\,\dd z])=aQ\,\dd z.
\]
The local Kuranishi identification therefore puts a neighbourhood
of zero in this direction in $\overline{\mathcal K_{\cE_t^Z}}$.
Since that target is Zariski closed, it contains the entire
complex line:
\[
 \CC\,Q\,\dd z\subset\overline{\mathcal K_{\cE_t^Z}},
 \qquad
 tQ\,\dd z\in\overline{\mathcal K_{\cE_t^Z}}.
\]
The same contracted family thus proves the negative answers for
the stated target.
\end{proof}

The common harmonic value differs from the value at the
Hermitian--Einstein point:
\[
 \rho_{\cE_t^Z}([\mathcal D^Z_{s,t}])=tQ\,\dd z\ne0,
 \qquad \rho_{\cE_t^Z}([\nabla_t^+])=0.
\]
By continuity, a sufficiently small neighbourhood of that point
is disjoint from the contracted line. There is no conflict with
the local embedding theorem of \cite[Proposition~2.16]{KV}.
The two-axis path lies in the ambient fixed-bundle space; it need
not lie in $\Mgs$ and is not contained in a single $\rho$-fibre.

\section{The two holomorphic projections}\label{sec:projections}

\begin{proof}[Proof of \cref{prop:C}]
On $C\times T_1$, compare $\widetilde D^Z_{s,t}$ with
\[
 (\widetilde D^Z_{s,t})'
 =\dd+sP\alpha+tP\,\dd\bar z.
\]
All coefficients of the latter commute, and its scalar
one-forms are closed. It is equivariant for the same lift $S$,
so it descends to a flat connection
$(\mathcal D^Z_{s,t})'$ on $E_Z$. Set
\[
 U=e^{-\pi R/4}=\frac{\Id-R}{\sqrt2},
 \qquad UPU^{-1}=Q,\qquad US=SU.
\]
The last identity makes $U$ a descended bundle automorphism,
and the two holomorphic identifications are explicit:
\[
 \begin{aligned}
 U\bigl((\widetilde D^Z_{s,t})'\bigr)^{0,1}U^{-1}
 &=\dbar+tQ\,\dd\bar z
   =(\widetilde D^Z_{s,t})^{0,1},\\
 \bigl((\widetilde D^Z_{s,t})'\bigr)^{1,0}
 &=\partial+sP\alpha
   =(\widetilde D^Z_{s,t})^{1,0}.
 \end{aligned}
\]
Thus the induced bundles are isomorphic via $U$, while the
adjoint holomorphic structures coincide under the identity.
For the stated real nonresonant $s,t$, \cref{thm:main} gives
stability on both sides; these identifications give the same
stability for the flat comparison connection. Hence both classes
belong to $\Mgs$, and
\[
 \begin{aligned}
 \Pi([(\mathcal D^Z_{s,t})'])&=\Pi([\mathcal D^Z_{s,t}]),\\
 F_{(\mathcal D^Z_{s,t})'}&=0,
 \qquad F_{\mathcal D^Z_{s,t}}
   =stK\,\alpha\wedge\dd\bar z\ne0.
 \end{aligned}
\]
Since curvature transforms by conjugation under gauge, the two
full connection classes are distinct.

On $T_3$, use the analogous pair
\[
 \widetilde A_t=\dd+tP\,\dd z^1+tQ\,\dd\bar z^2,
 \qquad
 \widetilde A_t'=\dd+tP(\dd z^1+\dd\bar z^2).
\]
Denote their descents to $E_W$ by $\mathcal A_t$ and
$\mathcal A_t'$, respectively. Both use the lift $S$, and
\[
 U(\widetilde A_t')^{0,1}U^{-1}
   =\widetilde A_t^{0,1},
 \qquad
 (\widetilde A_t')^{1,0}=\widetilde A_t^{1,0}.
\]
For $t\notin(\pi/2)\ZZ$, both holomorphic bundles are stable
by \cref{thm:main}, giving
\[
 \Pi([\mathcal A_t'])=\Pi([\mathcal A_t]),
 \qquad
 F_{\mathcal A_t'}=0,
 \qquad
 F_{\mathcal A_t}=t^2K\,\dd z^1\wedge\dd\bar z^2\ne0.
\]
This proves the assertion on $W$.
\end{proof}

The two holomorphic projections record isomorphism classes
separately. Here their identifications are $U$ and $\Id$;
they do not arise from a single gauge transformation of the
full connection.

\section{Energy and local flatness}\label{sec:energy}

\begin{proof}[Proof of \cref{prop:energy-gap}]
All operators act on $\End E$-valued forms with the metric induced
by $h$; wedge products include composition of endomorphisms.
The Hermitian inner product is linear in its first argument.
Flatness of $\nabla$ gives
\[
 F_D=\partial_\nabla\varphi+
       \dbar_\nabla\varphi+\varphi\wedge\varphi.
\]
The NHYM equations and the K\"ahler identity
$[\Lambda_\omega,\dbar_\nabla]=-i\partial_\nabla^*$ imply
\begin{equation}\label{eq:phi-equations}
 \partial_\nabla\varphi=-\varphi\wedge\varphi,
 \qquad
 \partial_\nabla^*\varphi
   =i\Lambda_\omega\dbar_\nabla\varphi=0,
 \qquad
 F_D=\dbar_\nabla\varphi.
\end{equation}
Here $\Lambda_\omega\varphi=\dbar_\nabla^*\varphi=0$
by type. The induced unitary connection on $\End E$ is flat,
so the Bochner--Kodaira identity gives
\begin{equation}\label{eq:equal-laplacians}
 \Delta_{\partial_\nabla}=\Delta_{\dbar_\nabla}
\end{equation}
\cite[Chapters~3 and~4]{Huybrechts}. Consequently,
\begin{equation}\label{eq:energy}
 \begin{aligned}
 \|F_D\|_{L^2}^2
 &=\|\dbar_\nabla\varphi\|_{L^2}^2
  =\langle\Delta_{\dbar_\nabla}\varphi,\varphi\rangle_{L^2}\\
 &=\|\partial_\nabla\varphi\|_{L^2}^2
   +\|\partial_\nabla^*\varphi\|_{L^2}^2
  =\|\varphi\wedge\varphi\|_{L^2}^2.
 \end{aligned}
\end{equation}

Let $H_\nabla$ be the harmonic projection on $(1,0)$-forms.
Fix coordinate balls $U_a\Subset V_a$, with the $U_a$ covering
$X$ and the $V_a$ simply connected. In a parallel unitary frame
on $V_a$, every harmonic $(1,0)$-form has holomorphic coefficients
by \eqref{eq:equal-laplacians}. Interior mean-value estimates on
these fixed balls therefore give
$\|v\|_{C^0}\le C_{X,r}\|v\|_{L^2}$ for every harmonic $v$,
uniformly over flat unitary backgrounds. Orthogonal projection yields
\[
 \begin{aligned}
 \|H_\nabla u\|_{C^0}
 &\le C_{X,r}\|H_\nabla u\|_{L^2}
  \le C_{X,r}\|u\|_{L^2}\\
 &\le C_{X,r}\operatorname{Vol}(X)^{1/2}\|u\|_{C^0}
  =:A_{X,r}\|u\|_{C^0}.
 \end{aligned}
\]
This does not require the harmonic-space dimension to be constant.

Set $\varphi_0=H_\nabla\varphi$ and $\eta=\varphi-\varphi_0$.
Then
\[
 H_\nabla\eta=0,
 \qquad \partial_\nabla^*\eta=0,
 \qquad \partial_\nabla\eta=-\varphi\wedge\varphi.
\]
Since $\varphi_0$ is holomorphic,
$\varphi_0\wedge\varphi_0$ is a holomorphic $(2,0)$-form.
It is $\dbar_\nabla$-harmonic by type and therefore
$\partial_\nabla$-harmonic by \eqref{eq:equal-laplacians}.
Integration by parts gives
\[
 \begin{aligned}
 \int_X\langle\varphi_0\wedge\varphi_0,
                    \partial_\nabla\eta\rangle\,\dd V
 &=\int_X\langle\partial_\nabla^*
                (\varphi_0\wedge\varphi_0),\eta\rangle\,\dd V=0,\\
 \|\partial_\nabla\eta\|_{L^2}^2
 &=-\langle\varphi\wedge\varphi-\varphi_0\wedge\varphi_0,
                    \partial_\nabla\eta\rangle_{L^2}.
 \end{aligned}
\]
Use Cauchy--Schwarz, the identity
$\varphi\wedge\varphi-\varphi_0\wedge\varphi_0
 =\varphi\wedge\eta+\eta\wedge\varphi_0$, and
$|a\wedge b|\le C_{\wedge,r}|a|\,|b|$,
where $C_{\wedge,r}$ depends only on dimension and rank.
Together with the spectral gap and $\partial_\nabla^*\eta=0$,
these give
\[
 \begin{aligned}
 \sqrt{\lambda_1^{1,0}(\nabla)}\,\|\eta\|_{L^2}
 &\le\langle\Delta_{\partial_\nabla}\eta,\eta\rangle_{L^2}^{1/2}
  =\|\partial_\nabla\eta\|_{L^2}\\
 &\le\|\varphi\wedge\varphi-\varphi_0\wedge\varphi_0\|_{L^2}\\
 &\le C_{\wedge,r}(1+A_{X,r})
      \|\varphi\|_{C^0}\|\eta\|_{L^2}.
 \end{aligned}
\]
If $\eta\ne0$, division gives the asserted lower bound with
$c_{X,r}=[C_{\wedge,r}(1+A_{X,r})]^{-1}$, independent of
$\nabla$. Below this threshold, $\eta=0$, so $\varphi$ is
harmonic and holomorphic; \eqref{eq:phi-equations} then gives
$\varphi\wedge\varphi=F_D=0$. Thus non-flatness forces
$\eta\ne0$ and the stated bound.

Finally, for a polystable induced bundle with $c_1(E)=c_2(E)=0$
in real cohomology, the Hermitian--Einstein correspondence supplies
a metric with zero Einstein constant \cite{Donaldson,UY}.
In complex dimension at least two, the Chern--Weil identity forces
its Chern curvature to vanish \cite{Lubke}; on a curve, zero
K\"ahler contraction already forces vanishing. This supplies the
flat background. For this fixed metric and $(0,1)$-operator, the
positive spectral gap gives a neighbourhood of the Chern connection
in which every zero-Einstein-constant NHYM connection is flat,
as in \cite[Proposition~2.26]{KV}.
\end{proof}

The threshold still depends on the background. On $Z$,
$\partial_{\nabla_t^+}^*(P\alpha)=0$ and
$\partial_{\nabla_t^+}(P\alpha)=-2tR\alpha\wedge\dd z$.
Together with \eqref{eq:surface-coexact}, these identities give
\begin{equation}\label{eq:degenerating-gap}
 \Delta_{\partial_{\nabla_t^+}}(P\alpha)=8t^2P\alpha,
 \qquad 0<\lambda_1^{1,0}(\nabla_t^+)\le8t^2\quad(t\ne0).
\end{equation}
The sufficient radius therefore tends to zero as $t\to0$.
Varying $t$ also changes the induced $(0,1)$-operator, so this
family does not satisfy the fixed-structure hypothesis of the
local statement.

\section*{Data availability}
No datasets were generated or analysed during this study.

\end{document}